\def\L{\mathcal{L}}
\def\X{\mathcal{X}}
\def\K{\mathcal{K}}
\def\C{\mathcal{C}}
\def\R{\mathcal{R}}
\def\H{\mathcal{H}}
\newtheorem{df}{Definition}[section]
\newtheorem{thm}[df]{Theorem}
\newtheorem{cor}[df]{Corollary}
\newtheorem{lem}[df] {Lemma}
\begin{document}
\title{\bf An addendum to: Analytically Riesz operators and Weyl and Browder type theorems}
\author{Enrico Boasso}

\date{  }

\maketitle

\begin{abstract}\noindent In this note a characterization of anallytically Riesz operators is given. 
This work completes the article \cite{B}.
\end{abstract}

\section{Introduction}

\noindent Anallytically Riesz operators, i.e., bounded and linear maps $T$ defined on a Banach space $\X$ such that there exists an analytical
function $f$ defined on a neighbourhood of the spectrum of $T$ with the property that $f(T)$ is Riesz, were studied in \cite{KS}. In addition, several spectra and some spectral properties of this class of operators were studied in  \cite{B}. In particular, when instead of an analytical funcion there exists a polynomial
$P\in\mathbb C [X]$ such that $P(T)$ is Riesz, $T$ is said to be a polynomially Riesz operator. The structure of polynomially Riesz operators
was studied in \cite{Geo} (see also \cite[Theorem 2.13]{ZDHD}). To learn more about polynomially Riesz operators see for example \cite{ZDHD}
and its reference list.\par

\indent After the publication of \cite{B}, a characterization of analytically Riesz operators was obtained. In fact, similar arguments to the ones in 
\cite{B} prove that necessary and sufficient for $T$ to be analytically Riesz is that there exist $\X_1$ and $\X_2$  two closed and complemented $T$-invatiant subspaces of $X$ such that if $T=T_1\oplus T_2$, then $T_1\in\L (\X_1)$ is an arbitray operator
and $\X_2$ is finite dimensional or $T_2\in\L (\X_2)$ is a polynomially Riesz operator ($T_i=T\mid_{\X_i}$, $i=1, 2$). In other words,
an analytically Riesz operator is essentially the direct sum of an arbitray operator and a polynomially Riesz operator. The objective of this note is to present this characterization and some other related results. This note consists in a completation of \cite{B}.    

\section{Results}

\indent Fron now on  $\X$ will denote an infinite dimensional complex Banach space, $\L(\X)$  the algebra of all bounded and linear maps
defined on and with values in $\X$ and $I\in\L (\X)$ the identity map. If $T\in \L(\X)$, then $N(T)$, $R(T)$ and $\sigma (T)$ will stand for the null space, the
range and the spectrum of $T$, respectively. In addition, $\K(\X)\subset \L (\X)$ will denote
the closed ideal of compact operators defined on $\X$, $\C (\X)$ the Calkin algebra of $\X$ and $\pi\colon \L (\X)\to\C (\X)$ the quotient map.\par

\indent Recall that $T\in L(\X)$  is said to be  a
\it Fredholm \rm operator if $\alpha(T)=\dim N(T)$ and $\beta(T)=\dim \X/R(T)$
are finite dimensional. In addition,  $T\in\L (\X)$ is said to be a \it Riesz operator\rm, if $T-\lambda I$ is Fredholm 
for all $\lambda\in\mathbb C$, $\lambda\neq 0$.
The set of all Riesz operators defined on $\X$ will be denoted by $\R(\X)$. 
More generally, $T$ will be said to be an \it analytically Riesz operator\rm, if there exists a holomorphic function $f$ defined on an open 
neighbourhood of $\sigma (T)$  such that $f(T)\in \R (\X)$
($\H (\sigma(T))$ will denote the algebra of germs of analytic functions defined on open neighbourhoods of $\sigma (T)$).
In particular, $T$ will be said to be \it polynomially Riesz\rm,
 if there exists $P\in C[X]$ such that $P(T)\in \R(\X)$ (see \cite{ZDHD}).

\indent To prove the main result of this note, a preliminary result is needed.\par

\begin{lem}\label{lemma1}Let $\X$ be an infinite dimensional complex Banach space and consider $T\in\R (\X)$. Then,
if $V\in\L (\X)$ is such that $TV-VT\in\K (\X)$, $VT$ and $TV\in \R (\X)$.
\end{lem}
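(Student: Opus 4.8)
The plan is to transfer everything to the Calkin algebra $\C(\X)$ via the Ruston characterization of Riesz operators. Recall that, by Atkinson's theorem, $S-\lambda I$ is Fredholm exactly when $\pi(S)-\lambda$ is invertible in $\C(\X)$; hence $S\in\R(\X)$ if and only if $\sigma(\pi(S))\subseteq\{0\}$, i.e. (using that the spectrum in a Banach algebra is non-empty) if and only if $\pi(S)$ is quasinilpotent, $\|\pi(S)^n\|^{1/n}\to 0$. Under this dictionary the hypotheses become: $\pi(T)$ is quasinilpotent, and $TV-VT\in\K(\X)$ says precisely that $\pi(T)\pi(V)=\pi(V)\pi(T)$ in $\C(\X)$.

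The core of the argument is then a one-line spectral-radius estimate for a quasinilpotent element multiplied by a commuting one. Since $\pi(T)$ and $\pi(V)$ commute we have $(\pi(V)\pi(T))^n=\pi(V)^n\pi(T)^n$ for all $n\in\n$, so that
\[
\|(\pi(V)\pi(T))^n\|^{1/n}\le \|\pi(V)^n\|^{1/n}\,\|\pi(T)^n\|^{1/n}\le \|\pi(V)\|\,\|\pi(T)^n\|^{1/n}\longrightarrow 0\quad (n\to\infty).
\]
Thus $\pi(VT)=\pi(V)\pi(T)$ is quasinilpotent, and by the Ruston characterization $VT\in\R(\X)$. Because $TV-VT\in\K(\X)$ gives $\pi(TV)=\pi(VT)$, the operator $TV$ has the same (quasinilpotent) image in the Calkin algebra and hence also lies in $\R(\X)$; equivalently, one may simply repeat the estimate with $\pi(T)\pi(V)$ in place of $\pi(V)\pi(T)$.

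I expect no serious obstacle. The only step that deserves to be spelled out rather than taken for granted is the equivalence between ``$S-\lambda I$ is Fredholm for every $\lambda\neq 0$'' and ``$\pi(S)$ is quasinilpotent in $\C(\X)$'', which rests on Atkinson's theorem together with non-emptiness of the spectrum; everything after that is the elementary fact that quasinilpotence is absorbed by multiplication by any commuting element, via submultiplicativity of the norm.
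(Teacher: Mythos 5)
Your proof is correct and follows essentially the same route as the paper: pass to the Calkin algebra, note that $TV-VT\in\K(\X)$ means $\pi(T)$ and $\pi(V)$ commute, and conclude that the product with the quasinilpotent element $\pi(T)$ is quasinilpotent. The only difference is that you spell out the spectral-radius estimate that the paper dismisses as ``not difficult to prove,'' which is a welcome addition rather than a divergence.
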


\begin{proof}Recall that necessary and sufficient for $T\in\R (\X)$ is that $\pi (T)\in \C(\X)$ is quasinilpotent. Since
$\pi (V)$ commutes with $\pi (T)$, it is not difficult to prove that $\pi (V)\pi (T)=\pi (T)\pi (V)$ is quasinilpotent.
In particular, $VT$ and $TV\in \R (\X)$.
\end{proof}

\indent Next a characterization of analytically Riesz operators is given. Note that the following notation will be used.
If $\X_1$ and $\X_2$ are two closed and complemented $T$-invariant subspaces of the Banach space $\X$ ($T\in \L (\X)$),
then $T$ has the decomposition $T=T_1\oplus T_2$, where $T_i=T\mid_{\X_i}$, $i=1, 2$.

\begin{thm}\label{thm1}Let $\X$ be an infinite dimensional complex Banach space and consider $T\in\L (\X)$. Then, the following statements are
equivalent:\par
\noindent \rm (i) \it The operator $T$ is analytically Riesz.\par
\noindent \rm (ii) \it There exist $\X_1$ and $\X_2$ two closed and complementd $T$-invariant subspaces of $\X$ with the property that, 
if $T=T_1\oplus T_2$, then $T_1\in \L (\X_1)$ is an arbitrary operator and either $\X_2$ is finite dimensional or $T_2\in\L (\X_2)$ is polynomially Riesz.\par
\end{thm}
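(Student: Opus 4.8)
The plan is to translate the statement into the Calkin algebra. Recall from the proof of Lemma~\ref{lemma1} that $S\in\R(\X)$ exactly when $\pi(S)$ is quasinilpotent, i.e.\ when $\sigma(\pi(S))\subseteq\{0\}$. For $S=f(T)$ with $f\in\H(\sigma(T))$ one has $\pi(f(T))=f(\pi(T))$, since $\pi$ is a continuous unital homomorphism, and $\sigma(f(\pi(T)))=f(\sigma(\pi(T)))$ by the spectral mapping theorem, so
\[
f(T)\in\R(\X)\iff \sigma(\pi(T))\subseteq Z(f),
\]
where $Z(f)$ denotes the zero set of $f$ in its domain. I will also use that, for a decomposition $T=T_1\oplus T_2$ along $\X=\X_1\oplus\X_2$, one has $f(T)=f(T_1)\oplus f(T_2)$, and that $f(T)\in\R(\X)$ holds iff $f(T_i)\in\R(\X_i)$ for $i=1,2$; the implication from the whole operator to the summands is where Lemma~\ref{lemma1} enters, applied with $V$ the bounded idempotent of $\X$ onto $\X_i$, which commutes with $T$ and hence with $f(T)$, so that $f(T_i)\oplus 0=f(T)V\in\R(\X)$ and therefore $f(T_i)\in\R(\X_i)$.

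For (i)$\Rightarrow$(ii), fix $f$ holomorphic on an open neighbourhood $\Omega$ of $\sigma(T)$ with $f(T)\in\R(\X)$. Let $\Omega_1$ be the union of the connected components of $\Omega$ on which $f$ vanishes identically, let $\Omega_2$ be the union of the remaining components, and set $\sigma_i=\sigma(T)\cap\Omega_i$. Then $\sigma_1$ and $\sigma_2$ are disjoint closed subsets of $\sigma(T)$ with $\sigma_1\cup\sigma_2=\sigma(T)$, so the Riesz idempotent $\frac{1}{2\pi i}\int_\Gamma(zI-T)^{-1}\,dz$ for $\sigma_1$ decomposes $\X=\X_1\oplus\X_2$ into closed, complemented, $T$-invariant subspaces with $T=T_1\oplus T_2$ and $\sigma(T_i)=\sigma_i$. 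Since $f\equiv 0$ on the neighbourhood $\Omega_1$ of $\sigma(T_1)$, the functional calculus gives $f(T_1)=0$, so $T_1$ is an arbitrary operator. By the remark above, $f(T_2)\in\R(\X_2)$, i.e.\ $\sigma(\pi_2(T_2))\subseteq Z(f)$, where $\pi_2$ is the quotient map of $\X_2$ and $\sigma(\pi_2(T_2))\subseteq\sigma(T_2)\subseteq\Omega_2$. But on $\Omega_2$ the zero set of $f$ is discrete, so this compact set is finite. If it is empty, then $\C(\X_2)$ has an element of empty spectrum, hence $\C(\X_2)=\{0\}$ and $\X_2$ is finite dimensional. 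Otherwise $\sigma(\pi_2(T_2))=\{\mu_1,\dots,\mu_k\}$ with $k\ge 1$, and for $P(z)=\prod_{j=1}^k(z-\mu_j)$ one gets $\sigma(P(\pi_2(T_2)))=P(\sigma(\pi_2(T_2)))=\{0\}$, so $P(T_2)\in\R(\X_2)$ and $T_2$ is polynomially Riesz. This gives (ii).

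For (ii)$\Rightarrow$(i), if $\X_2$ is finite dimensional then every operator on it is compact, so $T_2\in\R(\X_2)$ and in particular $T_2$ is polynomially Riesz; hence we may assume $P(T_2)\in\R(\X_2)$ for some nonzero $P$, so that $\sigma(\pi_2(T_2))\subseteq Z(P)$ is finite, say $\{\mu_1,\dots,\mu_k\}$. It is enough to produce $f\in\H(\sigma(T))$ with $f(T_1)=0$ and $\sigma(\pi_2(T_2))\subseteq Z(f)$, because then $f(T)=0\oplus f(T_2)\in\R(\X)$. The construction: pick an open neighbourhood $V_1$ of $\sigma(T_1)$ avoiding the finite set $\{\mu_j:\mu_j\notin\sigma(T_1)\}$ and an open neighbourhood $V_2$ of $\sigma(T_2)$; set $f\equiv 0$ on $V_1$, and on each connected component $C$ of $V_2$ set $f\equiv 0$ if $C\cap V_1\neq\emptyset$ and $f(z)=\prod_{\mu_j\in C}(z-\mu_j)$ otherwise. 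This patches to a well-defined holomorphic function on $V_1\cup V_2\supseteq\sigma(T)$ vanishing on $\sigma(T_1)$ and on $\{\mu_1,\dots,\mu_k\}$.

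The step I expect to be the main obstacle is precisely this last construction: guaranteeing that $f$ is single-valued and, in whatever sense the definition of ``analytically Riesz'' requires, non-degenerate, when $\sigma(T_1)$ and $\sigma(T_2)$ overlap or when $\sigma(T_2)$ cannot be separated from $\sigma(T_1)$. The natural remedy is to first absorb into the $\X_1$-summand the spectral subspaces of $T_2$ at those points of $\sigma(T_2)$ that are Riesz points of $T_2$ and lie in $\sigma(T_1)$: such subspaces are finite dimensional, so the enlarged $T_1$ is still arbitrary and the reduced $T_2$ is still polynomially Riesz, and now the two spectra can meet only at roots of $P$, where $f$ is required to vanish in any case, making the patching harmless. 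Everything else is the Calkin-algebra dictionary of the first paragraph together with the Riesz decomposition theorem.
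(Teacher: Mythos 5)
Your proof is correct, and while it follows the same overall skeleton as the paper (split $\sigma(T)$ into the part where $f$ is locally zero and the rest, then build $f$ for the converse), the execution is genuinely different and in one place strictly more careful. For (i)$\Rightarrow$(ii) the paper invokes \cite[Theorem 1]{KS} and then factors $f_2(z)=(z-\lambda_1)^{k_1}\cdots(z-\lambda_n)^{k_n}g(z)$ with $g$ invertible on $\sigma(T_2)$, feeding the product into Lemma \ref{lemma1}; you instead read everything off in the Calkin algebra: $f(T_2)\in\R(\X_2)$ forces $\sigma(\pi_2(T_2))\subseteq Z(f)$, which is finite because $Z(f)$ is discrete on $\Omega_2$, and the monic polynomial with those roots already does the job by the spectral mapping theorem. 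This buys you a self-contained argument (no appeal to the structure theorem of \cite{KS} beyond the obvious Riesz decomposition) and a clean treatment of the degenerate case via ``empty spectrum forces $\C(\X_2)=0$.'' For (ii)$\Rightarrow$(i) your version is actually an improvement: the paper's proof tacitly assumes $\sigma(T_1)\cap\sigma(T_2)=\emptyset$ so that $U_1$ and $U_2$ can be chosen disjoint, which is not granted by statement (ii); your patching handles overlapping spectra, and it is already consistent as written, since on $V_1\cap V_2$ (and on any component of $V_2$ meeting $V_1$) both prescriptions give $0$, so the ``remedy'' in your last paragraph is not needed. The only residual issue is one you correctly flag and which the paper shares: with the definition of analytically Riesz as literally stated (no nondegeneracy imposed on $f$), the constructed $f$ may be identically zero near all of $\sigma(T)$ when $\sigma(T_2)$ cannot be separated from $\sigma(T_1)$; whether that is acceptable depends on the convention inherited from \cite{KS}, not on your argument.
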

\begin{proof} Suppose that statement (i) holds.  Let $f\in\H (\sigma (T))$ be such that $f(T)\in\R (\X)$.  According to \cite[Theorem 1]{KS},
there are two closed disjoint sets $S_1$ and $S_2$ such that $\sigma (T)=S_1\cup S_2$, $f$ is  locally zero 
at each point of $S_1$ but it is not at any point of $S_2$. In addition,  if $\X_1$ and $\X_2$ are two closed and complemented $T$-invariant subspaces of $\X$ associated to the decompositon defined by $S_1$ and $S_2$, then
$f(T_1)=0$ and  either $\X_2$ is a finite dimensional space  
or $T_2$ can be decomposed as a direct sum of operators (see the proof of \cite[Theorem 1]{KS} for details).\par

\indent Suppose then that dim $\X _2$ is infinite. Let $f_2\in\H (\sigma (T_2))$ be defined as the restriction of $f$ to an open set  containing $S_2$ but disjoint to $S_1$ (recall that $\sigma (T_1)=S_1$ and  $\sigma (T_2)=S_2$). Since $f(T)=0\oplus f_2(T_2)$, $T_2\in \L (\X_2)$ is analytically Riesz.  
Since $f$ is analytically zero at no point of $S_2$, the set $f_2^{-1} (0)\cap S_2$ is finite. In particular, there exist $n\in\mathbb N$, $k_i\in \mathbb N$ and $\lambda_i\in f_2^{-1} (0)\cap S_2 $ ($i=1,\dots ,n$) such that $f_2(z)=(z-\lambda_1)^{k_1}\dots (z-\lambda_n)^{k_n}g(z)$, where 
$g\in \H (\sigma (T_2))$ is such that
$g(z)\neq 0$ for all $z\in\sigma (T_2)$. \par

\indent Now, since $(T_2-\lambda_1)^{k_1}\dots (T_2-\lambda_n)^{k_n}g(T_2)=f_2(T_2)\in\R (\X_2)$, and $g(T_2)\in\L (\X_2)$
is an invertible operator, which commutes with  $(T_2-\lambda_1)^{k_1}\dots (T_2-\lambda_n)^{k_n}$, according to Lemma \ref{lemma1}, $(T_2-\lambda_1)^{k_1}\dots (T_2-\lambda_n)^{k_n}\in\R (\X_2)$. 
Consequently, $T_2$ is polinomially Riesz.\par

\indent To prove the converse, consider two closed and disjoint sets $S_1$, $S_2\subset \mathbb C$
such that $S_1\cup S_2=\sigma (T)$, $\sigma (T_i)=S_i$, $i=1, 2$. Let $U_i$ be two disjoint open sets
such that $S_i\subset U_i$, $i=1, 2$, and define $f\in\H (\sigma (T))$ as follows: $f\mid_{U_1}=0$
and when $\X_2$ is finite dimensional, $f\mid_{U_2}=z$, while when dim $\X_2$ is infinite, $f\mid_{U_2}=P$, where $P\in\mathbb C[X]$ is such that $P(T)\in\R (\X)$. Therefore, $f(T)=0\oplus T_2$ (dim $\X_2<\infty$) or $f(T)=0\oplus P(T)$ (dim $\X_2$ infinite).
In both cases $T$ is analytically Riesz.\par

\end{proof}

\indent Recall that polynomially Riesz operators were characterized in \cite{Geo, ZDHD} (see in paricular
\cite[Theorems 2.2, 2.3, 2.13]{ZDHD}).
  
\begin{cor}\label{cor111}Let $\X$ be an infinite dimensional complex Banach space and consider $T\in\L (\X)$.
Suppose that there is $f\in\H (\sigma (T))$ such that $f(T)\in \R(\X)$ and for each $x\in\sigma (T)$,
$f$ is not locally zero at $x$. Then, $T$ is polynomially Riesz.
\end{cor}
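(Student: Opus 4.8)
The plan is to obtain the conclusion directly from Theorem~\ref{thm1}, or more precisely from its proof, by observing that the extra hypothesis forces the ``arbitrary'' summand to be trivial. Since $f(T)\in\R(\X)$, the operator $T$ is analytically Riesz, and \cite[Theorem 1]{KS} provides a decomposition $\sigma(T)=S_1\cup S_2$ into two disjoint closed sets, where $S_1$ is the set of points at which $f$ is locally zero and $S_2$ the set of points at which it is not. The assumption that $f$ is locally zero at no point of $\sigma(T)$ means exactly that $S_1=\emptyset$; hence the associated decomposition $T=T_1\oplus T_2$ is the trivial one, namely $\X_1=\{0\}$ and $\X_2=\X$. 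As $\X$ is infinite dimensional, the alternative ``$\X_2$ finite dimensional'' of statement (ii) of Theorem~\ref{thm1} cannot occur, so $T=T_2$ is polynomially Riesz.

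For completeness I would also record the self-contained argument, which is just the relevant part of the proof of Theorem~\ref{thm1} with $T$ playing the role of $T_2$. Let $\Omega$ be an open neighbourhood of $\sigma(T)$ on which a representative of $f$ is defined. Because $f$ is not identically zero near any point of $\sigma(T)$, its zero set has no accumulation point in a suitable neighbourhood of $\sigma(T)$; combined with the compactness of $\sigma(T)$, this shows that $f^{-1}(0)\cap\sigma(T)$ is a finite set $\{\lambda_1,\dots,\lambda_n\}$, and at each $\lambda_i$ the zero of $f$ has some finite order $k_i\in\mathbb{N}$. Consequently, after shrinking $\Omega$ if necessary, one may write $f(z)=(z-\lambda_1)^{k_1}\cdots(z-\lambda_n)^{k_n}g(z)$ with $g\in\H(\sigma(T))$ and $g(z)\neq 0$ for all $z\in\sigma(T)$; by the spectral mapping theorem $0\notin g(\sigma(T))=\sigma(g(T))$, so $g(T)\in\L(\X)$ is invertible, and it commutes with $Q(T)$, where $Q(z)=(z-\lambda_1)^{k_1}\cdots(z-\lambda_n)^{k_n}\in\mathbb{C}[X]$.

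Finally, since $f(T)\in\R(\X)$, the operator $g(T)^{-1}\in\L(\X)$ commutes with $f(T)$, and $g(T)^{-1}f(T)=Q(T)$, Lemma~\ref{lemma1} (applied with $f(T)$ as the Riesz operator and $V=g(T)^{-1}$) yields $Q(T)\in\R(\X)$; that is, $T$ is polynomially Riesz. The only delicate point is the finiteness of $f^{-1}(0)\cap\sigma(T)$, which rests precisely on the hypothesis that $f$ is locally non-zero at every point of $\sigma(T)$, so that its zero set is discrete near $\sigma(T)$, together with compactness of the spectrum; the remaining steps are routine and already appear in the proof of Theorem~\ref{thm1}.
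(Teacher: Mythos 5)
Your proposal is correct and follows essentially the same route as the paper: the author likewise notes that the hypothesis forces $S_1=\emptyset$ in the Kaashoek--Smyth decomposition, so that $\X=\X_2$ is infinite dimensional and the argument from the proof of Theorem~\ref{thm1} (factoring $f$ as a polynomial times an invertible function and invoking Lemma~\ref{lemma1}) applies to $T$ itself. Your additional self-contained paragraph just restates that part of the theorem's proof, correctly.
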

\begin{proof} Using the same notation as in  Theorem \ref{thm1}, according to the proof of this Theorem and \cite[Theorem 1]{KS},
$S_1=\emptyset$ and $\X=\X_2$. Since dim $\X_2$ is infinite, $T$ is polynomially Riesz.
\end{proof}

\indent Note that according to Corollary \ref{cor111}, the operators considered in \cite{B} are polynomially Riesz.
Compare with \cite{ZDHD}.
\bibliographystyle{amsplain}

\vskip.3truecm
\noindent Enrico Boasso\par
\noindent E-mail address: enrico\_odisseo@yahoo.it
\end{document}